\makeatletter \@addtoreset{equation}{section} \makeatother
\renewcommand\thefigure{\thesection.\@arabic\c@figure}
\renewcommand\thetable{\thesection.\@arabic\c@table}
\newtheorem{theorem}{Theorem}[section]
\newtheorem{lemma}[theorem]{Lemma}
\newtheorem{proposition}[theorem]{Proposition}
\newtheorem{corollary}[theorem]{Corollary}
\theoremstyle{remark}
\newcommand{\mc}[1]{{\mathcal #1}}
\newcommand{\bb}[1]{{\mathbb #1}}
\newcommand{\eps}{\varepsilon}
\title{Spectral gap inequality for long-range random walks}
\date{}
\author{Milton Jara}
\address{Instituto de Matem\'atica Pura e Aplicada, Estrada Dona Castorina 110, 22460-320  Rio de Janeiro, Brazil.}  \email{mjara@impa.br}
\begin{document}

\begin{abstract}
We show that the spectral gap of a random walk on the domain of normal attraction of an $\alpha$-stable law is of order $\mc O(n^{-\alpha})$ when restricted to boxes of size $n$. The proof is based on a comparison principle that may be of independent interest. The comparison principle also allows to derive a sharp bound on the spectral gap of exclusion and zero-range processes with long jumps when restricted to finite boxes in terms of the gap on the complete graph. 
\end{abstract}

\maketitle

A fundamental question in the theory of finite-state, reversible Markov chains is the estimation of the {\em spectral gap} of the chain, which is the difference between the first and second eigenvalue of the generator of the chain. The inverse of the gap, known as the {\em relaxation time} of the chain, measures the time that the chain need to equilibrate if it starts from a {\em typical} initial condition. The classical book \cite{LevPerWil} contains various examples of applications of this estimate and also various methods to obtain meaningful estimates of this spectral gap. 

One possible way to obtain estimates of the spectral gap of a chain, is by means of {\em comparison methods}, which allow to translate estimates of the gap in simpler chains to more complicated chains, see \cite{DiaS-C} for various examples of this technique.

It is well understood that for finite-range, symmetric random walks and conservative systems on $\bb Z^d/ n \bb Z^d$, like exclusion or zero-range processes, the spectral gap should be of order $\mc O(n^{-2})$, since by Brownian scaling, particles need times of order $\mc O(n^2)$ to discover the  geometry of the graph where they move. For random walks, this is an exercise, while for interacting particle systems it is a more delicate task. The corresponding estimates were proved in \cite{DiaS-C}, \cite{Qua} for the exclusion process and in \cite{LanSetVar}, \cite{BouCapD-PPos}, \cite{Mor} for the zero-range process. The proof goes by comparison with the case of systems evolving on the complete graph.

If instead of finite-range jumps we consider long-range jumps on the domain of normal attraction of an $\alpha$-stable law, the picture changes. The random walk now has a L\'evy scaling, and it needs times of order $\mc O(n^\alpha)$ to discover the geometry of the underlying graph. Therefore, the spectral gap should be of order $\mc O(n^{-\alpha})$. Our aim is to prove this fact for random walks evolving on finite intervals of the integer lattice. In order to fix ideas, let us consider a walk that jumps between $x$ and $y$ at instantaneous rate\footnote{We will take {\em continuous-time} Markov chains to avoid non-interesting periodicity issues} $|y-x|^{-(1+\alpha)}$. Surprisingly, comparing this walk with the walk evolving on the complete graph with constant rate $n^{-(1+\alpha)}$, one obtains the right bound for the spectral gap. The same holds true for interacting particle systems, and there is not much left to prove in that case. Therefore, in principle it should very easy to prove the same bound for general rates on the domain of attraction of an $\alpha$-stable law. But again surprisingly, the proof for general rates on te domain of attraction of $\alpha$-stable laws require a multiscale analysis inspired by rigorous renormalization group methods. 

This article is organized as follows. In Section \ref{s1} we define the random walks and the particle systems we will consider. In Section \ref{s2} we prove our main result, which is an abstract comparison theorem between expectations of subpolynomial functions with respect to measures on the domain of normal attraction of $\alpha$-stable laws. Although we could not think of any application different to the one stated here, we think that this comparison principle could be of interest in other situations. Once the comparison principle is proved, it is easy to derive the spectral gap inequality for the models stated in Section \ref{s1}. This is done in Section \ref{s3}. Finally, in Section \ref{s4} we discuss some applications of the spectral gap inequality. In particular, we focus on the derivation of stochastic fractional PDE's which was our original motivation.

\section{Definitions and results}
\label{s1}

\subsection{Continuous-time Markov chains}
\label{s1.1}

Let $\Omega$ be a finite set. We call $\Omega$ a {\em state space}. A {\em jump rate} is a function $r: \Omega \times \Omega \to [0,\infty)$ such that $r(x,x) =0$ for any $x \in \Omega$. Given  a jump rate $r(\cdot,\cdot)$ on a state space $\Omega$ we define $\gamma:\Omega \to [0,\infty)$ as
\[
\gamma (x) = \sum_{y \in \Omega} r(x,y).
\]
For each $x \in \Omega$ such that $\gamma(x) \neq 0$, we define $p(x,\cdot) : \Omega \to [0,1]$ as
\[
p(x,y) = \frac{r(x,y)}{\gamma(x)}
\]
for any $y \in \Omega$. If $\gamma(x) =0$ we define $p(x,x)=1$ and $p(x,y) =0$ for $y \neq x$. For any $x \in \Omega$, $p(x,\cdot)$ is a probability measure. The continuous-time Markov chain of state space $\Omega$ and rate $r(\cdot,\cdot)$ is the process $x(\cdot) = \{x(t); t \geq 0\}$ with the following dynamics. Whenever $x(t)$ is at a site $x$, it waits an exponential time of rate $\gamma(x)$, at the end of which it jumps to a site $y$ chosen with probability $p(x,\cdot)$, independently of the whole history of the process up to he moment of jump. After the jumps, a new independent, exponential time starts afresh, with rate $\gamma(y)$.

The generator $L$ of this Markov chain can be described explicitly in terms of $r(\cdot,\cdot)$. For each $f: \Omega \to \bb R$, $L f : \Omega \to \bb R$ is given by
\[
L f(x) := \sum_{y \in \Omega} r(x,y) \big( f(y) - f(x)\big)
\]
for any $x \in \Omega$. For each $x \in \Omega$, we denote by $\bb P^x$ the law of the chain $x(\cdot)$ with initial state $x_0=x$ and we denote by $\bb E^x$ the expectation with respect to $\bb P^x$. 

For each measure\footnote{In this article, all measures are {\em probability measures}.} $\nu$ in $\Omega$, we define
\[
\bb P^{\nu} = \sum_{x \in \Omega} \nu(x) \bb P^x.
\]
The measure $\bb P^\nu$ turns out to be the law of the chain $x(\cdot)$ with initial law $\nu$. We say that a measure $\mu$ is an {\em equilibrium measure} (or simply an {\em equilibrium}) of the chain $x(\cdot)$ if
\[
\bb P^\mu\big(x(t)=x\big) = \mu(x)
\]
for any $t \geq 0$ and any $x \in \Omega$. In other words, if $x(0)$ has law $\mu$, then $x(t)$ has law $\mu$ for any $t \geq 0$. This property is equivalent to the condition
\[
\int Lf d \mu = \text{ for any } f : \Omega \to \bb R.
\]
We say that the chain $x(\cdot)$ is {\em irreducible} if for any $x,y \in \Omega$ there exists a finite path $\{x_0=x,\dots,x_\ell=y\}$ such that $r(x_{i-1},x_i) >0$ for $i = 1,\dots,\ell$. An equivalent characterization of irreducibility is the following: for every $x,y \in \Omega$ and any $t > 0$, $\bb P^x\big(x(t)=y\big) >0$. From now on we assume that the chain $x(\cdot)$ is irreducible. In that case there exists a {\em unique} equilibrium measure $\mu$ of $x(\cdot)$. Notice that the second characterization of irreducibility implies that $\mu(x) >0$ for any $x \in \Omega$. 

A classical problem in the theory of Markov chains is the characterization of the convergence to equilibrium of the law of $x(t)$. Let us define the {\em transition probability kernel}  $\big\{P_t(x,y); t \geq 0; x, y \in \Omega\big\}$ of $\big\{x(t); t \geq 0\big\}$ as 
\[
P_t(x,y) = \bb P^x\big(x(t) = y\big).
\]
For any $x \in \Omega$ and any $t \geq 0$, the function $y \mapsto P_t(x,\cdot)$ defines a measure in $\Omega$ which corresponds to the law of $x(t)$ conditioned to $x_0=x$. A classical and not very difficult result states that for any continuous-time, irreducible Markov chain on a finite state space,
\[
P_t(x,y) \xrightarrow[t \to \infty]{} \mu(y)
\]
exponentially fast for any $x,y \in \Omega$. The exponential speed of convergence $\lambda$ can be computed as a Lyapounov exponent: for any $f: \Omega \to \bb R$ and any $t \geq 0$, let $P_tf : \Omega \to \bb R$ be defined as $P_t f(x) = \bb E^x[f(x(t))]$ for any $x \in \Omega$. Then,
\[
\lambda = -\sup_{f} \Big\{\limsup_{t \to \infty} \frac{1}{t} \log \| P_t f \|_{L^2(\mu)} \Big\}
\]
where the supremum runs over functions $f: \Omega \to \bb R$ such that $\int f d \mu =0$. 

It can be shown that the irreducibility of $\mu$ is equivalent to $0$ being a {\em simple} eigenvalue of $L$. Since $L$ is a finite-dimensional operator, this implies that $0$ is an isolated eigenvalue. The number $\lambda$ turns out to be the {\em spectral gap} of the generator $L$:
\[
\lambda = - \sup\big\{ \Re(v); v \text{ is an eigenvalue of } L \}.
\]

We say that the chain $x(\cdot)$ is {\em reversible} with respect to $\mu$ if ti satisfies {\em detailed balance}, that is,
\[
\mu(x) r (x,y) = \mu(y) r(y,x) \text{ for any } x,y \in \Omega.
\]
In that case, the operator $L$ is symmetric with respect to $\mu$ and the spectral gap can be computed by a variational formula. Let $\mc D(\cdot)$ be the Dirichlet form associated to the operator $L$:
\[
\mc D(f) = \int f (-Lf) d \mu = \tfrac{1}{2} \sum_{x,y \in \Omega} R(x,y) \big(f(x) -f(y)\big)
\]
for any $f: \Omega \to \bb R$. Then,
\begin{equation}
\label{varia}
\lambda = \inf \Big\{ \mc D(f); \|f\|_{L^2(\mu)} =1, \int f d \mu =0\Big\}.
\end{equation}
Our aim will be to establish sharp lower bounds on the spectral gap of some families of reversible Markov chains.

\subsection{Random walks and domains of normal attraction} We say that a non-zero function $p: \bb Z \to [0,\infty)$ is the {\em transition rate} of a random walk if $p(0) =0$ and $\sum_{z \in \bb Z} p(z) < \infty$. We say that a jump rate is symmetric if $p(z) = p(-z)$ for any $z \in \bb Z$. The {\em random walk} with transition rate $p(\cdot)$ is the continuous-time Markov chain $\{y_t; t \geq 0\}$ of jump rate given by $r(x,y) = p(y-x)$ for any $x,y \in \bb Z$. Since $\gamma(x) = \sum_{y \in \bb Z} p(y-x)$ is finite and independent of $x$, the construction of the previous section can be carried out despite of $\bb Z$ being infinite. 

We say that a symmetric transition rate $p(\cdot)$ is in the {\em domain of normal attraction} of an $\alpha$-stable law, $\alpha \in (0,2)$ if there exists a constant $c \in (0,\infty)$ such that 
\begin{equation}
\label{antofagasta}
\lim_{x \to \infty} x^\alpha \sum_{y \geq x} p(y) = c.
\end{equation}
We denote this by $p(\cdot) \in \mathrm{DAN}(\alpha)$\footnote{Since DNA means something else, we use the spanish acronym for {\em dominio de atracci\'on normal}}. The pertinence of this definition can be seen in the following proposition, which is a part of the Gnedenko-Kolmogorov theorem:

\begin{proposition}
\label{p1}
Let $p(\cdot)$ be a symmetric transition rate. Let $\alpha \in (0,2)$and let $\{y(t); t \geq 0\}$ be the random walk with rate $p(\cdot)$. The sequence $\{t^{-1/\alpha}y(t); t \geq 0\}$ has a non-trivial limit in law if and only if $p(\cdot) \in \mathrm{DAN}(\alpha)$. In that case, the limiting law is a symmetric, $\alpha$-stable law.
\end{proposition}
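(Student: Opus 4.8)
The plan is to pass to Fourier space and reduce the statement to an Abelian--Tauberian equivalence for the characteristic exponent of the walk. Starting from $y(0)=0$, the function $f_\theta(t):=\bb E^0\big[e^{i\theta y(t)}\big]$ satisfies the linear ODE $f_\theta'(t)=f_\theta(t)\sum_{z\in\bb Z}p(z)(e^{i\theta z}-1)$ with $f_\theta(0)=1$, and since $p$ is symmetric the sum is real and equals $-\psi(\theta)$, where
\[
\psi(\theta):=\sum_{z\in\bb Z}p(z)\big(1-\cos(\theta z)\big)=2\sum_{z\geq 1}p(z)\big(1-\cos(\theta z)\big)
\]
is non-negative, even, continuous, with $\psi(0)=0$ and $0\le\psi\le 2\gamma<\infty$. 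Hence $\bb E^0[e^{i\theta y(t)}]=e^{-t\psi(\theta)}$, so the characteristic function of $t^{-1/\alpha}y(t)$ is $\theta\mapsto\exp\big(-t\,\psi(t^{-1/\alpha}\theta)\big)$. By L\'evy's continuity theorem, $\{t^{-1/\alpha}y(t)\}$ has a limit in law as $t\to\infty$ if and only if $\Psi(\theta):=\lim_{t\to\infty}t\,\psi(t^{-1/\alpha}\theta)$ exists in $[0,\infty]$ for every $\theta$, is finite, and is continuous at $0$; the limit is non-trivial (not a point mass at the origin) precisely when $\Psi\not\equiv 0$. Writing $u=t^{-1/\alpha}$ and substituting $v=u\theta$, this condition is equivalent to the single requirement that $u^{-\alpha}\psi(u)\to b$ as $u\downarrow 0$ for some $b\in(0,\infty)$, in which case necessarily $\Psi(\theta)=b|\theta|^{\alpha}$.

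The heart of the matter is then the analytic lemma: for $\alpha\in(0,2)$,
\[
\lim_{u\downarrow 0}u^{-\alpha}\psi(u)=b\in(0,\infty)\quad\Longleftrightarrow\quad\lim_{x\to\infty}x^{\alpha}\sum_{y\geq x}p(y)=c\in(0,\infty),
\]
and then $b=\kappa_\alpha c$ with $\kappa_\alpha:=2\alpha\int_0^{\infty}(1-\cos v)\,v^{-1-\alpha}\,dv\in(0,\infty)$. For the implication $(\Leftarrow)$ I would split $\psi(\theta)$ at $|z|\asymp 1/\theta$: on the range $|z|\lesssim 1/\theta$ one uses $1-\cos(\theta z)\asymp(\theta z)^2$ and Karamata's theorem, which turns \eqref{antofagasta} into regular variation of the truncated second moment $\sum_{|z|\le x}z^2 p(z)$ with index $2-\alpha$; on the range $|z|\gtrsim 1/\theta$ one uses $1-\cos(\theta z)\le 2$ together with \eqref{antofagasta} directly; the exact constant $\kappa_\alpha$ then falls out by rescaling $v=\theta z$ and a dominated-convergence argument against the regularly varying tail (equivalently, summation by parts rewrites $\psi(\theta)=2\int_0^{\infty}\sin v\,\overline p(v/\theta)\,dv+o(\theta^{\alpha})$ with $\overline p(x)=\sum_{y\ge x}p(y)$, and $\int_0^{\infty}\sin v\,v^{-\alpha}\,dv=\alpha\int_0^{\infty}(1-\cos v)\,v^{-1-\alpha}\,dv$). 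For the implication $(\Rightarrow)$, which is the genuinely Tauberian half, one has to recover the power-law rate of the tail from the smoothed quantity $\psi$: since $p\ge 0$ and $\int_0^{\pi}(1-\cos v)\,dv$ is a fixed positive number, averaging $\psi$ over a frequency window of width $\asymp 1/x$ sandwiches $\sum_{y\geq x}p(y)$ from above and below, which upgrades $\psi(u)\sim bu^{\alpha}$ first to regular variation of $\overline p$ with index $-\alpha$, and then --- using that $u^{-\alpha}\psi(u)$ has a genuine limit, not merely regularly varying behaviour --- to the sharp relation \eqref{antofagasta}. I expect this Tauberian step to be the main obstacle, since the tail of $p$ is not assumed monotone or even regularly varying a priori and its exact exponent must be extracted from an averaged quantity; everything else is routine bookkeeping. (Alternatively, as $y(t)$ is a compound Poisson sum of i.i.d.\ increments with law $p/\gamma$, one may simply invoke the Gnedenko--Kolmogorov criterion for \emph{normal} domains of attraction, e.g.\ Feller, vol.~II, Ch.~XVII.)

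Finally I would assemble the two directions. If $p\in\mathrm{DAN}(\alpha)$, the lemma gives $\psi(\theta)\sim\kappa_\alpha c|\theta|^{\alpha}$, hence $t\,\psi(t^{-1/\alpha}\theta)\to\kappa_\alpha c|\theta|^{\alpha}$, so $t^{-1/\alpha}y(t)$ converges in law to the distribution with characteristic function $\exp(-\kappa_\alpha c|\theta|^{\alpha})$; this is a finite, continuous, non-constant characteristic function, which by the L\'evy--Khintchine representation is exactly that of a symmetric $\alpha$-stable law, and it is non-trivial because $\kappa_\alpha c>0$. Conversely, if $t^{-1/\alpha}y(t)$ has a non-trivial limit, then by the reduction above $u^{-\alpha}\psi(u)\to b$ for some $b\in(0,\infty)$, and the lemma yields \eqref{antofagasta} with $c=b/\kappa_\alpha$, i.e.\ $p\in\mathrm{DAN}(\alpha)$; moreover the limit is again the law with characteristic function $\exp(-b|\theta|^{\alpha})$, i.e.\ symmetric $\alpha$-stable. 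This establishes the equivalence together with the identification of the limiting law.
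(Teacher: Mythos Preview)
The paper does not actually prove this proposition: it is introduced with the words ``which is a part of the Gnedenko--Kolmogorov theorem'' and then stated without proof. So there is no argument in the paper to compare against; the ``proof'' is a citation. You already reproduce this at the end of your proposal when you note that one may simply invoke the Gnedenko--Kolmogorov criterion for normal domains of attraction.

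Your detailed route---computing the characteristic exponent $\psi$ of the compound Poisson walk, reducing via L\'evy's continuity theorem to the asymptotics $u^{-\alpha}\psi(u)\to b$, and then proving the Abelian/Tauberian equivalence between this and the tail condition \eqref{antofagasta}---is correct and is in fact the standard way one proves this piece of Gnedenko--Kolmogorov in the symmetric case. You correctly flag the Tauberian direction as the only nontrivial step; this is a known Tauberian theorem for cosine-type transforms with nonnegative mass (the positivity of $p$ is exactly what makes the windowed-average sandwich work). One small point worth tightening: in the converse you pass from ``non-trivial limit in law exists'' to ``$u^{-\alpha}\psi(u)\to b\in(0,\infty)$'' by taking $\theta=1$; strictly, you first need to pick some $\theta_0$ with $\phi(\theta_0)\neq 0$ (guaranteed by continuity of the limiting characteristic function at $0$) to conclude that the limit $b$ is finite, after which the homogeneity $\Psi(\theta)=b|\theta|^\alpha$ follows for all $\theta$. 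Otherwise the argument is complete.
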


Since an $\alpha$-stable law is absolutely continuous, $\bb P^x\big(y(t)=y\big) \to 0$ as $t \to \infty$ for any $x,y \in \bb Z$. Therefore, the question about the convergence to equilibrium of this chain is not well posed. Nevertheless, this question makes sense if we restrict the walk to a {\em finite} interval. 

For each $n \in \bb N$, let $\Lambda_n = \{-n,\dots,n\}$ be the box of radius $n$ and centered at the origin in $\bb Z$. Let $p(\cdot)$ be a symmetric transition rate and for simplicity assume that $p(1)>0$. Let $\{y_n(t); t \geq 0\}$ be the Markov chain with state space $\Lambda_n$ and jump rate $r(x,y) = p(y-x)$. In other words, $\{y_n(t), t \geq 0\}$ is the walk restricted to the set $\Lambda_n$ mentioned above. The condition $p(1) >0$ implies that the chain $\{y_n(t); t \geq 0\}$ is irreducible, and therefore it has a unique equilibrium. Since $p(\cdot)$ is symmetric, the uniform measure $\mu_n$ in $\Lambda_n$ satisfies detailed balance, and therefore it is the unique equilibrium of this chain. The main result of this article is the following

\begin{theorem}
\label{t1}
Let $p(\cdot)$ be a symmetric transition rate. Assume that $p(\cdot) \in \mathrm{DAN}(\alpha)$ for some $\alpha \in (0,2)$ and also assume that $p(1) >0$. Let $\lambda_n$ be the spectral gap of the chain $\{y_t^n; t \geq 0\}$ described above. There exist constants $\kappa_1, \kappa_2 \in (0,\infty)$ such that
\begin{equation}
\label{SG}
\kappa_1 (2n+1)^{-\alpha} \leq \lambda_n \leq \kappa_2 (2n+1)^{-\alpha} 
\end{equation}
for any $n \in \bb N$.
\end{theorem}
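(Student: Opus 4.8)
\emph{Proof sketch.} The upper bound comes from the variational formula \eqref{varia} applied to the test function $f(x)=x$, which is non-constant on $\Lambda_n$ and has $\int f\,d\mu_n=0$ by symmetry. Here $\|f\|_{L^2(\mu_n)}^2=\tfrac{n(n+1)}{3}\asymp(2n+1)^2$, while
\[
\mc D_n(f)=\frac{1}{2(2n+1)}\sum_{x,y\in\Lambda_n}p(y-x)\,(y-x)^2\le\sum_{z=1}^{2n}z^2\,p(z).
\]
Summation by parts rewrites the last sum as $\bar p(1)+\sum_{z=2}^{N}(2z-1)\bar p(z)-N^2\bar p(N+1)$ with $N=2n$ and $\bar p(z)=\sum_{y\ge z}p(y)$; since \eqref{antofagasta} gives $\bar p(z)\le C z^{-\alpha}$ for all $z\ge1$ and $2-\alpha>0$, this is $\le C'(2n)^{2-\alpha}$. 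Hence $\lambda_n\le\mc D_n(f)/\|f\|_{L^2(\mu_n)}^2\le\kappa_2(2n+1)^{-\alpha}$.

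For the lower bound the plan is to compare $\mc D_n$ with the Dirichlet form of the random walk on the complete graph on $\Lambda_n$ with constant rate $\rho=\kappa_1(2n+1)^{-(1+\alpha)}$, whose spectral gap equals $(2n+1)\rho=\kappa_1(2n+1)^{-\alpha}$. Using \eqref{varia} and the identity $\mathrm{Var}_{\mu_n}(f)=\tfrac{1}{2(2n+1)^2}\sum_{x,y\in\Lambda_n}(f(x)-f(y))^2$, it suffices to prove
\[
\sum_{x,y\in\Lambda_n}p(y-x)\big(f(x)-f(y)\big)^2\;\ge\;\kappa_1(2n+1)^{-(1+\alpha)}\sum_{x,y\in\Lambda_n}\big(f(x)-f(y)\big)^2
\]
for every $f:\Lambda_n\to\bb R$. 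Setting $g(z)=\sum_{x:\,x,\,x+z\in\Lambda_n}(f(x)-f(x+z))^2$ for $|z|\le 2n$, this becomes $\sum_{|z|\le 2n}p(z)g(z)\ge\kappa_1(2n+1)^{-(1+\alpha)}\sum_{|z|\le 2n}g(z)$.

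If $p$ is the power law $p(z)=|z|^{-(1+\alpha)}$ this holds termwise, since $|z|^{-(1+\alpha)}\ge(2n)^{-(1+\alpha)}$ on $\{1\le|z|\le 2n\}$; this is the easy case mentioned in the introduction. For a general $p\in\mathrm{DAN}(\alpha)$ the rate $p$ may vanish on most of $\{1,\dots,2n\}$ and no termwise bound is available. The argument then exploits the structural properties of the profile $g$: it is non-negative, even, vanishes at $0$, is approximately sub-additive, $g(z_1+z_2)\le 2g(z_1)+2g(z_2)$ for $z_1,z_2\ge0$ (because $(a+b)^2\le 2a^2+2b^2$ and the relevant index sets nest), and therefore grows sub-quadratically, $g(kz)\le k^2 g(z)$. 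On the other hand \eqref{antofagasta} forces $p$ to carry a definite amount of mass, of order $2^{-k\alpha}$, in each dyadic block $\{2^k\le z<2^{k+1}\}$ up to range $\asymp 2n$, while the hypothesis $p(1)>0$ seeds the smallest scale. One then reconstructs the missing jump lengths from the available ones, routing a jump of length $z$ through a bounded number of hops at each dyadic scale below $|z|$ and controlling the accumulated constant across the $\mc O(\log n)$ scales by a suitably weighted Cauchy--Schwarz inequality. This is the content of the abstract comparison principle of Section \ref{s2}, which bounds $\sum_z p(z)g(z)$ from below, uniformly in $n$, by the corresponding quantity for the power-law rate, for every subpolynomial profile $g$; combined with the termwise bound for the power law and the complete-graph computation above it yields $\lambda_n\ge\kappa_1(2n+1)^{-\alpha}$.

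The delicate point, and the one to which Section \ref{s2} is devoted, is this last step: the multiscale reconstruction must be carried out without losing a logarithmic factor. A crude routing --- through nearest-neighbour steps only, or through an unweighted chain of available hops --- produces a spurious $\log(2n+1)$ that would destroy the sharp form \eqref{SG}. Avoiding it is what calls for the renormalization-group style bookkeeping, in which the scales carry geometrically decaying weights and the estimate is made to close on itself; one also has to keep track of the boundary effects in $\Lambda_n$, since the multiplicities $2n+1-|z|$ and the truncation of the jump sizes at $2n$ both enter the comparison.
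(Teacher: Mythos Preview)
Your proposal is correct and follows essentially the same route as the paper: prove the lower bound for $q(z)=|z|^{-(1+\alpha)}$ by the trivial termwise comparison with the complete graph, observe that the profile $g(z)=\sum_{x}(f(x+z)-f(x))^2$ is $2$-subpolynomial, and then invoke the comparison principle of Section~\ref{s2} (Theorem~\ref{t4}) to pass from $q$ to a general $p\in\mathrm{DAN}(\alpha)$.

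Two minor remarks. First, for the upper bound you use $f(x)=x$ while the paper uses an indicator; your choice is equally valid and the summation-by-parts computation is clean. Second, your heuristic description of how Section~\ref{s2} works --- dyadic blocks, weighted Cauchy--Schwarz --- does not match the actual mechanism there: the proof uses a geometric scale $b_n\sim b^n$ with $b$ taken \emph{close to $1$} (not $2$), and the closing of the estimate comes from making the recursive constant $\Gamma_2\sim(b-1)^{1+\alpha}$ small rather than from any Cauchy--Schwarz weighting. Since you ultimately defer to Theorem~\ref{t4} this does not affect the validity of your argument, but the intuition you sketch would not by itself avoid the logarithmic loss you warn about.
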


The estimate \eqref{SG} is known in the literature as the {\em spectral gap inequality} for the family of chains $\{y_n(t); t \geq 0\}_{n \in \bb N}$.
We present here a first application of this theorem, which relates the polynomial decay of the transition probability kernel of the random walk in $\bb Z$ to the spectral gap on finite boxes:

\begin{corollary}
\label{c1}
Let $p(\cdot) \in \mathrm{DAN}(\alpha)$ and let $\{y(t); t \geq 0\}$ be the random walk with transition rates $p(\cdot)$. Assume that $p(1) >0$. Then there exists a constant $c_0 \in (0,\infty)$ such that
\[
\bb P^0(y(t) = 0 ) \leq \frac{c_0}{t^{1/\alpha}}.
\] 
\end{corollary}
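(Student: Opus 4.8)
\emph{Proof strategy.} The plan is to read the return probability off the Fourier symbol of the generator. Because $\gamma=\sum_{z}p(z)<\infty$, the generator of the walk on $\bb Z$ is a bounded convolution operator, diagonalised by Fourier series; writing
\[
\psi(\theta)\;=\;\sum_{z\in\bb Z}p(z)\big(1-\cos(z\theta)\big)
\]
for its symbol (a nonnegative, continuous, $2\pi$-periodic function), translation invariance gives $\bb P^{0}\big(y(t)=0\big)=\tfrac1{2\pi}\int_{-\pi}^{\pi}e^{-t\psi(\theta)}\,d\theta$. So the whole matter reduces to the single estimate
\[
\psi(\theta)\;\ge\;c\,|\theta|^{\alpha}\qquad(|\theta|\le\pi)
\]
for some $c=c(p)>0$; this is the Fourier-side form of the hypothesis $p(\cdot)\in\mathrm{DAN}(\alpha)$, and it is the same input that produces the $\alpha$-stable limit in Proposition~\ref{p1}. (It is also exactly the statement that the walk projected onto the torus $\bb Z/(2n+1)\bb Z$ has spectral gap of order $n^{-\alpha}$, since the torus eigenvalues are the numbers $\psi\big(2\pi j/(2n+1)\big)$; one could therefore instead bound $\bb P^0(y(t)=0)$ by the torus return probability $\tfrac1{2n+1}\sum_{j}e^{-t\psi(2\pi j/(2n+1))}$ and optimise over $n\asymp t^{1/\alpha}$, but that sum needs the whole spectrum, not just the gap of Theorem~\ref{t1}, so one is led back to the same estimate — and the direct route below avoids a spurious logarithmic factor.)

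To prove the symbol bound I would split the defining sum at frequency $1/|\theta|$. Since $\psi$ is continuous and strictly positive on $(0,\pi]$ (the single term $p(1)\,(1-\cos\theta)$ is already positive there, using $p(1)>0$) and $|\theta|^{\alpha}\le\pi^{\alpha}$, it is enough to treat $0<\theta\le\delta$ with $\delta$ small. Set $N=\lfloor 1/\theta\rfloor$; for $N/2<z\le N$ one has $z\theta\in(\tfrac13,1]$ once $\delta<\tfrac13$, hence $1-\cos(z\theta)\ge 1-\cos\tfrac13>0$, so
\[
\psi(\theta)\;\ge\;2\,\big(1-\cos\tfrac13\big)\sum_{N/2<z\le N}p(z).
\]
By \eqref{antofagasta}, $\sum_{y\ge x}p(y)\sim c\,x^{-\alpha}$, whence $\sum_{N/2<z\le N}p(z)\sim c\,(2^{\alpha}-1)\,N^{-\alpha}$ as $N\to\infty$; since $N^{-\alpha}\ge\theta^{\alpha}$, this gives $\psi(\theta)\ge c'\theta^{\alpha}$ for small $\theta$, and the claim follows for all $|\theta|\le\pi$ by evenness together with the compactness remark above.

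Granting this, the proof closes with a one-line computation: $\bb P^{0}\big(y(t)=0\big)\le\tfrac1{\pi}\int_{0}^{\infty}e^{-ct\theta^{\alpha}}\,d\theta=\tfrac{\Gamma(1+1/\alpha)}{\pi\,c^{1/\alpha}}\,t^{-1/\alpha}$, which is the assertion with $c_0=\Gamma(1+1/\alpha)/(\pi c^{1/\alpha})$. The step I expect to be the main obstacle is precisely the lower bound on $\psi$: one must apply the tail asymptotics \eqref{antofagasta} on the right window of frequencies — handling $\theta$ near $\pi$ by compactness rather than through the tail — and keep the elementary estimates on $1-\cos$ uniform over that window. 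The Fourier representation and the final integral are routine.
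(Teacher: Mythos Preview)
Your proof is correct, but it takes a genuinely different route from the paper's. You diagonalise the walk by Fourier transform, prove the symbol lower bound $\psi(\theta)\ge c|\theta|^{\alpha}$ directly from the tail condition \eqref{antofagasta} (splitting at scale $1/|\theta|$ and using $p(1)>0$ for compactness away from zero), and integrate. This is clean and self-contained; in particular it does not use Theorem~\ref{t1} at all.

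The paper instead presents Corollary~\ref{c1} as an \emph{application} of the spectral gap inequality, via a Nash-type argument. Writing $\psi(t)=\sum_{x}f_t(x)^2$ with $f_t(x)=\bb P^0(y(t)=x)$, one has $\tfrac{d}{dt}\psi(t)=-2\sum_{x,y}p(y-x)(f_t(y)-f_t(x))^2$. Tiling $\bb Z$ by intervals of length $2n+1$ and applying Theorem~\ref{t1} on each interval (after subtracting the block average) yields $\tfrac{d}{dt}\psi(t)\le-2\kappa_1(2n+1)^{-\alpha}\psi(t)+2\kappa_1(2n+1)^{-(1+\alpha)}$; optimising over $n$ gives $\tfrac{d}{dt}\psi(t)\le-c_2\,\psi(t)^{1+\alpha}$, which integrates to $\psi(t)\le Ct^{-1/\alpha}$.

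Your Fourier argument is shorter and more elementary for this translation-invariant walk. The paper's approach, by contrast, shows how the finite-volume gap controls the infinite-volume heat-kernel decay without any spectral representation, and the Nash scheme would survive in settings where Fourier analysis is unavailable (non-translation-invariant rates, graphs without group structure). Your parenthetical about projecting to the torus anticipates only part of this: the paper does not bound by a torus return probability and sum over the full spectrum, but rather runs the differential-inequality argument with just the gap on each block.
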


In the next sections we will present other applications, which were actually our initial motivation to consider this problem.

\subsection{The exclusion process}

Let us consider the following dynamics. Let $p(\cdot)$ be a transition rate. Initially, particles are placed in $\Lambda_n$ in such a way that there is at most one particle per site. The particles follow independent Markov chains with rates $r(x,y)=p(y-x)$, except for the so-called {\em exclusion rule}: each time a particle tries to jump to a site already occupied by another particle, the jump is suppressed. The stochastic process $\{\eta^n(t); t \geq 0\}$ generated by this dynamics is known in the literature as the {\em exclusion process} with transition rate $p(\cdot)$. In order to describe in a more rigorous way the process generated by this formal description we need some notation. Define $\Omega_n^{\mathrm{ex}} = \{0,1\}^{\Lambda_n}$. For $\eta \in \Omega_n^{\mathrm{ex}}$ and $x,y \in \Lambda_n$, let $\eta^{x,y} \in \Omega_n^{\mathrm{ex}}$ be given by
\[
\eta^{x,y}_z=
\left\{
\begin{array}{r@{\;;\;}l}
\eta_y & z=x\\
\eta_x & z=y\\
\eta_z & z \neq x,y.
\end{array}
\right.
\]
For $f: \Omega_n^{\mathrm{ex}} \to \bb R$ we define $L_n^{\mathrm{ex}} f: \Omega_n^{\mathrm{ex}} \to \bb R$ as
\[
L_n^{\mathrm{ex}} f(\eta) := \sum_{x,y \in \Lambda_n} p(y-x) \eta_x (1-\eta_y) \big( f(\eta^{x,y})-f(\eta)\big)
\]
for any $\eta \in \Omega_n^{\mathrm{ex}}$. The process $\eta^n(\cdot) = \{\eta^n(t); t \geq 0\}$ turns out to be the continuous-time Markov chain generated by the operator $L_n^{\mathrm{ex}}$. This chain is not irreducible: the initial number of particles is left unchanged by the dynamics. However, if we assume that $p(1)p(-1) >0$, then the chain is irreducible on each of the sets
\[
\Omega_{n,\ell}^{\mathrm{ex}} := \Big\{ \eta \in \Omega_n^{\mathrm{ex}}; \sum_{x \in \Lambda_n} \eta_x = \ell\Big\},
\]
where $\ell =0,1,\dots,2n+1$. If in addition we assume that $p(\cdot)$ is symmetric, then on each of the sets $\Omega_{n,\ell}^{\mathrm{ex}}$ the unique equilibrium measure of the exclusion process in $\Omega_{n,\ell}^{\mathrm{ex}}$ is the uniform measure.

Let $\lambda_{n,k}^{\mathrm{ex}}$ be the spectral gap of the process $\eta(\cdot)$ restricted to the set $\Omega_{n,\ell}^{\mathrm{ex}}$. We have the following result:

\begin{theorem}
\label{t2} Let $p(\cdot)$ be a symmetric transition rate. Assume that $p(1) >0$ and that $p(\cdot) \in \mathrm{DAN}(\alpha)$ for some $\alpha \in (0,2)$. Let $\lambda_{n,\ell}^{\mathrm{ex}}$ be the spectral gap of the exclusion process with transition rate $p(\cdot)$ in $\Omega_{n,\ell}^{\mathrm{ex}}$. Then there exist constants $\kappa_1^{\mathrm{ex}}, \kappa_2^{\mathrm{ex}} \in (0,\infty)$ such that
\[
\kappa_1^{\mathrm{ex}} (2n+1)^{-\alpha} \leq \lambda_{n,\ell}^{\mathrm{ex}} \leq \kappa_2^{\mathrm{ex}} (2n+1)^{-\alpha}
\]
for any $n \in \bb N$ and any $\ell \in \{0,1,\dots,2n+1\}$.
\end{theorem}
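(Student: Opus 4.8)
The plan is to derive Theorem~\ref{t2} from Theorem~\ref{t1} together with the comparison principle of Section~\ref{s2}, in the spirit of \cite{DiaS-C} and \cite{Qua}. Fix $n\in\bb N$ and $\ell$ with $0<\ell<2n+1$ (the cases $\ell\in\{0,2n+1\}$ are trivial, $\Omega_{n,\ell}^{\mathrm{ex}}$ being a single point). Since $p(\cdot)$ is symmetric, the exclusion process restricted to $\Omega_{n,\ell}^{\mathrm{ex}}$ is reversible with respect to the uniform measure $\mu_{n,\ell}$, so \eqref{varia} applies:
\[
\lambda_{n,\ell}^{\mathrm{ex}}=\inf\Big\{\mc D_n^{\mathrm{ex}}(f)\;/\;\mathrm{Var}_{\mu_{n,\ell}}(f)\;:\;f\text{ non-constant on }\Omega_{n,\ell}^{\mathrm{ex}}\Big\},
\]
where $\mc D_n^{\mathrm{ex}}(f)=\tfrac12\sum_{x,y\in\Lambda_n}p(y-x)\int\eta_x(1-\eta_y)\big(f(\eta^{x,y})-f(\eta)\big)^2\,d\mu_{n,\ell}$. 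I bound this ratio from above and from below separately.

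For the upper bound I would use a linear test function. Let $a:\Lambda_n\to\bb R$ be a second eigenfunction of the generator $L_n^{\mathrm{walk}}$ of the restricted walk $\{y_n(t);t\ge0\}$ of Theorem~\ref{t1}, so $L_n^{\mathrm{walk}}a=-\lambda_n a$; being $\mu_n$-orthogonal to the constants, $a$ satisfies $\sum_{x\in\Lambda_n}a(x)=0$. Put $f(\eta)=\sum_{x\in\Lambda_n}a(x)\eta_x$. A short computation --- using $f(\eta^{x,y})-f(\eta)=(a(y)-a(x))(\eta_x-\eta_y)$, the identity $\eta_x(1-\eta_y)(\eta_x-\eta_y)=\eta_x(1-\eta_y)$, the symmetry of $p(\cdot)$ (which annihilates the $\eta_x\eta_y$ terms), and $L_n^{\mathrm{walk}}a=-\lambda_n a$ --- gives $L_n^{\mathrm{ex}}f=-\lambda_n f$ on $\Omega_{n,\ell}^{\mathrm{ex}}$. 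Moreover $\int f\,d\mu_{n,\ell}=\tfrac{\ell}{2n+1}\sum_x a(x)=0$ and $f$ is non-constant on $\Omega_{n,\ell}^{\mathrm{ex}}$, so $f$ is admissible and $\lambda_{n,\ell}^{\mathrm{ex}}\le\mc D_n^{\mathrm{ex}}(f)/\mathrm{Var}_{\mu_{n,\ell}}(f)=\lambda_n\le\kappa_2(2n+1)^{-\alpha}$ by Theorem~\ref{t1}.

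For the lower bound it suffices, by the formula above, to prove a Poincar\'e inequality $\mathrm{Var}_{\mu_{n,\ell}}(f)\le\kappa\,(2n+1)^{\alpha}\,\mc D_n^{\mathrm{ex}}(f)$ with $\kappa$ independent of $n$, $\ell$ and $f$. I would first prove this for the canonical rate $\bar p(z)=|z|^{-(1+\alpha)}\mathbf{1}_{\{z\ne0\}}$, which lies in $\mathrm{DAN}(\alpha)$: since $\bar p(y-x)\ge(4n)^{-(1+\alpha)}$ for all $x,y\in\Lambda_n$, the corresponding Dirichlet form dominates $(4n)^{-(1+\alpha)}$ times the Dirichlet form $\mc D_n^{\mathrm{CG}}$ of the exclusion process on the complete graph on $\Lambda_n$ with unit rate per pair; the spectral gap of the latter is of order $2n+1$, uniformly in $\ell$, by the classical analysis of the Bernoulli--Laplace model (Diaconis--Shahshahani), so that $\mathrm{Var}_{\mu_{n,\ell}}(f)\le C(2n+1)^{-1}\mc D_n^{\mathrm{CG}}(f)\le C'(2n+1)^{\alpha}\,\mc D_n^{\bar p,\mathrm{ex}}(f)$.

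It remains to pass from $\bar p$ to a general $p\in\mathrm{DAN}(\alpha)$, for which no pointwise bound $p(y-x)\gtrsim(2n+1)^{-(1+\alpha)}$ is available: this is precisely where the comparison principle of Section~\ref{s2} is needed. The plan is to use it to obtain $\mc D_n^{\bar p,\mathrm{ex}}(f)\le C\,\mc D_n^{p,\mathrm{ex}}(f)$ uniformly in $n,\ell,f$; combined with the previous display this yields the Poincar\'e inequality, hence $\lambda_{n,\ell}^{\mathrm{ex}}\ge\kappa_1(2n+1)^{-\alpha}$. Two translations are involved, both routine once the comparison principle is in hand: first one passes from the statement about expectations of subpolynomial functions under $\mathrm{DAN}(\alpha)$ measures to the single-particle comparison $\mc D_n^{\bar p,\mathrm{walk}}\le C\,\mc D_n^{p,\mathrm{walk}}$; then one lifts this to the exclusion Dirichlet form by the Diaconis--Saloff-Coste path technique, writing each transposition $(x\,y)$ as a product of transpositions of pairs at intermediate dyadic scales so that $f(\eta^{x,y})-f(\eta)$ telescopes along the corresponding moves, and controlling the congestion using the scale-by-scale counts of available jumps guaranteed by \eqref{antofagasta}. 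The combinatorial bookkeeping in this last step is the main obstacle within this section; the genuinely hard estimate --- the multiscale argument behind the comparison principle itself --- is carried out in Section~\ref{s2} and used here as a black box. (Alternatively, the octopus inequality of Caputo, Liggett and Richthammer gives $\lambda_{n,\ell}^{\mathrm{ex}}=\lambda_n$ outright, from which Theorem~\ref{t2} is immediate by Theorem~\ref{t1}; I prefer the route above, which stays within the comparison-to-the-complete-graph framework of this paper.)
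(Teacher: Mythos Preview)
Your proof is correct, but it is not the route the paper takes. The paper's proof of Theorem~\ref{t2} is the one-line argument you relegate to a parenthetical remark at the end: by the Caputo--Liggett--Richthammer theorem (Aldous' conjecture), $\lambda_{n,\ell}^{\mathrm{ex}}=\lambda_n$ for every $n,\ell$, and Theorem~\ref{t1} finishes. Your ``preferred'' route is closer in spirit to the paper's proof of Theorem~\ref{t3} for the zero-range process: compare first to the complete-graph model, then use the comparison principle of Section~\ref{s2} to pass from $q(z)=|z|^{-(1+\alpha)}$ to a general $p\in\mathrm{DAN}(\alpha)$.

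One simplification is worth pointing out. In the last step you propose to ``lift'' the single-particle comparison $\mc D_n^{\bar p,\mathrm{walk}}\le C\,\mc D_n^{p,\mathrm{walk}}$ to the exclusion Dirichlet form via a Diaconis--Saloff-Coste path argument with transpositions at dyadic scales. This detour is unnecessary: Theorem~\ref{t4} applies \emph{directly} at the level of the exclusion Dirichlet form, exactly as in the proof of Theorem~\ref{t3}. Writing $\psi^{x,y}(f)=\int\big(f(\eta^{x,y})-f(\eta)\big)^2\,d\mu_{n,\ell}$ and $\phi(k)=\sum_{x}\psi^{x,x+k}(f)$, the identity $(x\,z)=(x\,y)(y\,z)(x\,y)$ together with the permutation invariance of $\mu_{n,\ell}$ gives $\psi^{x,z}\le 6\big(\psi^{x,y}+\psi^{y,z}\big)$, so $\phi$ is $6$-subpolynomial and Theorem~\ref{t4} yields $\mc D_n^{\bar p,\mathrm{ex}}(f)\le\kappa\,\mc D_n^{p,\mathrm{ex}}(f)$ in one stroke. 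No congestion bookkeeping is needed. What your longer route buys is independence from \cite{CapLigRic}; what the paper's route buys is a proof in one sentence.
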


The easiest way to prove this theorem is using {\em Aldous' conjecture}, which states that in any weighted graph, the spectral gap of the exclusion process is equal to the spectral gap of the underlying random walk. This conjecture was proved by Caputo, Liggett and Richthammer in \cite{CapLigRic}. In our context, Theorem \ref{t2} is an immediate consequence of Theorem \ref{t1} and Aldous' conjecture.

\subsection{The zero-range process} Let $g: \bb N_0 \to [0,\infty)$ be such that $g(0) =0$ and $g(k) > 0$ whenever $k \neq 0$. We assume that $g(\cdot)$ satisfies {\em Andjel's condition}:
\[
\sup_{k \in \bb N_0} \big| g(k+1) -g(k)\big| < +\infty.
\]
Andjel's condition is needed in order to guarantee the existence of the zero-range dynamics in infinite volume. It is also needed to prove various results even in finite volume, so it is a standard assumption in the literature. 
Let $\Omega_n^{\mathrm{zr}} = \bb N_0^{\Lambda_n}$ be the state space of a Markov process which we describe below. For $x,y \in \Lambda_n$ and $\xi \in \Omega_n^{\mathrm{zr}}$ such that $\xi_x \leq 1$, define $\xi^{x,y} \in \Omega_n^{\mathrm{zr}}$ as
\[
\xi^{x,y}_z = 
\left\{
\begin{array}{c@{\;;\;}l}
\xi_x-1 & z =x\\
\xi_y+1 & z =y\\
\xi_z & z \neq x,y.
\end{array}
\right.
\]
Let $p(\cdot)$ as in Theorem \ref{t1}. For each $f: \Omega_n^{\mathrm{zr}} \to \bb R$ we define $L_n^{\mathrm{zr}} f: \Omega_n^{\mathrm{zr}} \to \bb R$ as
\[
L_n^{\mathrm{zr}} f(\xi) := \sum_{x,y \in \Lambda_n} p(y-x) g(\xi_x) \big( f(\xi^{x,y})-f(\xi)\big).\footnote{Here we use the convention $g(\xi_x) f(\xi^{x,y})=0$ whenever $\xi_x=0$.}
\]
The {\em zero-range process } with interaction rate $g(\cdot)$ and transition rate $p(\cdot)$ is the continuous-time Markov chain $\{\xi^n(t); t \geq 0\}$ in $\Omega_n^{\mathrm{zr}}$ generated by the operator $L_n^{\mathrm{zr}}$. The dynamics of this process is the following. A particle jumps from $x$ to $y$ at instantaneous rate $g(\xi_x) p(y-x)$. This happens independently for each couple $x,y \in \Lambda_n$. Since $p(1) >0$, this chain is ergodic on each of the sets
\[
\Omega_{n,\ell}^{\mathrm{zr}} := \Big\{ \xi \in \Omega_n^{\mathrm{zr}}; \sum_{x \in \Lambda_n} \xi_x = \ell\Big\},
\]
and the zero-range process is in effect a chain in a finite-state space. Its unique equilibrium measure can be described as follows. Define $g(0)!=1$ and $g(\ell)! = g(1) \dots g(\ell)$ for $\ell \geq 1$. Then, the measure $\mu_{n,\ell}^{\mathrm{zr}}$ in $\Omega_{n,\ell}^{\mathrm{zr}}$ defined as
\[
\mu_{n,\ell}^{\mathrm{zr}}(\xi) : = \frac{1}{Z_{n,\ell}} \prod_{x \in \Lambda_n} \frac{1}{g(\xi_x)!} 
\]
for any $\xi \in \Omega_{n,\ell}^{\mathrm{zr}}$ is the equilibrium measure of the zero-range process in $\Omega_{n,\ell}^{\mathrm{zr}}$.  Here $Z_{n,\ell}$ is the normalization constant, which is finite since $\Omega_{n,\ell}^{\mathrm{zr}}$ is finite. We have the following result:

\begin{theorem}
\label{t3}
Let $p(\cdot)$ be a symmetric transition rate such that $p(1) >0$. Assume that $p(\cdot) \in \mathrm{DAN}(\alpha)$ for some $\alpha \in (0,2)$. Let $\lambda_{n,\ell}^{\mathrm{zr}}$ be the spectral gap of the zero-range process of interaction rate $g(\cdot)$ and transition rate $p(\cdot)$ in $\Omega_{n,\ell}$. There exists a constant $\kappa_0^{\mathrm{zr}} \in (0,\infty)$ such that
\begin{itemize}
\item[i)] whenever there exists $\eps_0 >0$ and $\ell_0 \in \bb N$ such that $g(\ell+\ell_0) > g(\ell) + \eps_0$ for any $\ell \in \bb N_0$,
\[
\kappa_0^{\mathrm{zr}} (2n+1)^{-\alpha} \leq \lambda_{n,\ell}^{\mathrm{zr}}
\]
for any $n, \ell \in \bb N$,
\item[ii)] if $g(k) = \mathbb{1}\{k \geq 1\}$, then
\[
\kappa_0^{\mathrm{zr}} (2n+1)^{-\alpha}(1+\rho)^{-2} \leq \lambda_{n,\ell}^{\mathrm{zr}}
\]
for any $n, \ell \in \bb N$, where $\rho = \frac{\ell}{2n+1}$. 
\end{itemize}
\end{theorem}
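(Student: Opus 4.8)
\textit{Plan.} Both items follow the two-step scheme used for finite-range zero-range processes in \cite{LanSetVar}, \cite{Mor}: first reduce, by a comparison of Dirichlet forms, the long-jump process on $\Lambda_n$ to the zero-range process on the \emph{complete graph} $\Lambda_n$ (all pairs jumping at a common rate), and then invoke the spectral gap of the latter, which is density-independent under the growth condition of (i) and deteriorates like $(1+\rho)^{-2}$ for the constant-rate dynamics. The bridge between the two steps is Theorem \ref{t1}: on the complete graph with per-edge rate $\beta$ the random-walk gap equals $\beta(2n+1)$ and its Dirichlet form is the largest possible, so Theorem \ref{t1} is equivalent to the single-particle inequality
\[
\tfrac12\sum_{x,y\in\Lambda_n}p(y-x)\big(F(x)-F(y)\big)^2 \;\geq\; \kappa_1(2n+1)^{-(1+\alpha)}\cdot\tfrac12\sum_{x,y\in\Lambda_n}\big(F(x)-F(y)\big)^2
\]
for every $F:\Lambda_n\to\bb R$, and, more importantly, its proof in Section \ref{s2} supplies, for each ordered pair $(u,v)$ of sites, a family of $p$-paths from $u$ to $v$ realizing this comparison with controlled length and congestion.

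\textit{Step 1: reduction to the complete graph.} Write $\mc D_{n,\ell}^{\mathrm{zr}}$ for the Dirichlet form of the process with rate $p$ and $\mc D_{n,\ell}^{\mathrm{cg}}$ for that of the zero-range process on $\Lambda_n$ with constant per-edge rate $1$; both are computed against the \emph{same} measure $\mu_{n,\ell}^{\mathrm{zr}}$, which does not depend on $p$. I would establish
\[
\mc D_{n,\ell}^{\mathrm{zr}}(f)\;\geq\; \kappa\,(2n+1)^{-(1+\alpha)}\,\mc D_{n,\ell}^{\mathrm{cg}}(f)\qquad\text{for all }f:\Omega_{n,\ell}^{\mathrm{zr}}\to\bb R,
\]
with $\kappa$ universal — the ``moving particle lemma'' adapted to long jumps. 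Fixing $(u,v)$ and a $p$-path $u=w_0,\dots,w_k=v$ from the family above, and writing $\xi^{u\to w}$ for the configuration obtained from $\xi$ by moving one particle from $u$ to $w$, one telescopes $f(\xi^{u,v})-f(\xi)=\sum_{i=1}^k\big(f(\xi^{u\to w_i})-f(\xi^{u\to w_{i-1}})\big)$ and applies Cauchy--Schwarz. For each $i$, the change of variables $\zeta=\xi^{u\to w_{i-1}}$ turns $\int g(\xi_u)\big(f(\xi^{u\to w_i})-f(\xi^{u\to w_{i-1}})\big)^2\,d\mu_{n,\ell}^{\mathrm{zr}}$ into $\int g(\zeta_{w_{i-1}})\big(f(\zeta^{w_{i-1},w_i})-f(\zeta)\big)^2\,d\mu_{n,\ell}^{\mathrm{zr}}$ — the factor $g(\xi_u)$ being exactly absorbed by the Radon--Nikodym weight of the substitution — i.e.\ into the $\{w_{i-1},w_i\}$-edge contribution to $\mc D_{n,\ell}^{\mathrm{zr}}$. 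Summing over $(u,v)$ and reorganizing the right-hand side according to which $p$-edge is used, the loss is the length-weighted congestion of the chosen paths, which by the input from Section \ref{s2} is $\mc O\big((2n+1)^{1+\alpha}\big)$. This step is \emph{uniform} in $g$ (Andjel's condition being used only to keep intermediate quantities finite) and in $\ell$.

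\textit{Step 2: the gap on the complete graph.} Here the hypotheses on $g$ enter. If $g$ satisfies the growth condition of (i), the complete-graph zero-range process with per-edge rate $1$ has spectral gap at least $c_0(2n+1)$ uniformly in $\ell$, i.e.\ $\mathrm{Var}_{\mu_{n,\ell}^{\mathrm{zr}}}(f)\leq (c_0(2n+1))^{-1}\mc D_{n,\ell}^{\mathrm{cg}}(f)$ — the complete-graph instance of the estimate of \cite{LanSetVar}, with $c_0$ inheriting the dependence on $\eps_0,\ell_0$. If $g(k)=\mathbb{1}\{k\geq1\}$ the growth condition fails, but the corresponding gap is $\geq c_0(2n+1)(1+\rho)^{-2}$ by the constant-rate estimate of \cite{Mor} specialized to the complete graph (the simplest geometry). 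Inserting either bound into Step 1 gives $\mc D_{n,\ell}^{\mathrm{zr}}(f)\geq \kappa\,(2n+1)^{-(1+\alpha)}\mc D_{n,\ell}^{\mathrm{cg}}(f)\geq \kappa c_0\,(2n+1)^{-\alpha}\,\mathrm{Var}_{\mu_{n,\ell}^{\mathrm{zr}}}(f)$ in case (i), and the same with an extra $(1+\rho)^{-2}$ in case (ii), which are the asserted bounds.

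\textit{Main obstacle.} The moving particle lemma itself is elementary; the real content of Step 1 is that the single-particle comparison must be available \emph{with canonical paths of controlled length and congestion}, not merely as the spectral-gap bound of Theorem \ref{t1}. The bare gap is equivalent only to a \emph{flow} comparison between $p$ and the complete graph, and a flow cannot be telescoped through a zero-range move. This is precisely what the multiscale construction of Section \ref{s2} must provide, and I expect the delicate point to be checking that unfolding the scales does not inflate the path lengths — the long jumps of a $\mathrm{DAN}(\alpha)$ rate have to be used directly at the coarse scales rather than assembled out of short ones. For part (ii), the secondary difficulty is importing the $(1+\rho)^{-2}$ estimate for the constant-rate dynamics with exactly that power and uniformly in $n$; beyond these two points the argument is bookkeeping.
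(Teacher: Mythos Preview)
Your two-step architecture (compare the $p$-Dirichlet form to the complete-graph Dirichlet form, then invoke \cite{Cap}/\cite{LanSetVar} for (i) and \cite{Mor} for (ii)) is exactly what the paper does. The difference is in how Step~1 is carried out, and your ``main obstacle'' is real for your route but is bypassed entirely by the paper's.

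You propose to lift a single-particle \emph{path} comparison to the zero-range process via the moving particle lemma, and you correctly worry that Theorem~\ref{t1} by itself only gives a spectral/flow comparison, whereas telescoping a zero-range move requires actual paths with controlled length and congestion. You then hope that Section~\ref{s2}'s multiscale argument can be mined for such paths. It is not clear that it can: the proof of Theorem~\ref{t4} is purely inequality-based (blocks $B_n$ are bounded by $A_n$ and $D_n$ via the subpolynomial inequality, then summed), and no explicit path family appears. Extracting one with the right congestion would be extra work.

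The paper avoids this detour. Your telescoping + Cauchy--Schwarz + change of variables, applied with a \emph{single} intermediate site, is exactly the well-known inequality
\[
\phi_{n,\ell}^{x,z}(f)\;\leq\;2\big(\phi_{n,\ell}^{x,y}(f)+\phi_{n,\ell}^{y,z}(f)\big),\qquad
\phi_{n,\ell}^{x,y}(f):=\int g(\xi_x)\big(f(\xi^{x,y})-f(\xi)\big)^2\,d\mu_{n,\ell}^{\mathrm{zr}}
\]
(see \cite[Ch.~5.5]{KipLan}). This says precisely that the function $k\mapsto\phi_{n,\ell}(k):=\sum_{y}\phi_{n,\ell}^{y,y+k}(f)$ is $2$-subpolynomial. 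Now Theorem~\ref{t4} applies \emph{directly to this $\phi$}, giving $\sum_k q(k)\phi_{n,\ell}(k)\leq\kappa\sum_k p(k)\phi_{n,\ell}(k)$, i.e.\ the $q$-Dirichlet form is dominated by $\kappa$ times the $p$-Dirichlet form. Bounding $q(z)\geq(2n+1)^{-(1+\alpha)}$ then yields the comparison with the complete graph. In short: the abstract comparison principle is stated for arbitrary subpolynomial functions precisely so that it can be applied at the level of the zero-range edge terms themselves, making the whole path/congestion machinery unnecessary.
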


\section{The comparison principle}
\label{s2}

The proofs of Theorems \ref{t1}, \ref{t2} and \ref{t3} are based on a general {\em comparison principle}, which allows to reduce the proof to the case on which we replace $p(\cdot)$ by the more canonical transition rate $q(z) = |z|^{-(1+\alpha)}$. In this section we prove such comparison principle, which turns out to be very general and it can be of independent interest. In order to state this comparison principle, we need t introduce some definitions. We say that a function $\phi: \bb N \to [0,\infty)$ is {\em $K$-subpolynomial} if there exists $K$ finite such that
\begin{equation}
\label{caldera}
\phi(x+y) \leq K\big(\phi(x) + \phi(y)\big)
\end{equation}
for any $x,y \in \bb N$. Notice that any $K$-subpolynomial function $\phi$ satisfies $\phi(x) \leq C x^\nu$ for $\nu = 1+\frac{\log K}{\log 2}$ and some finite constant $C$, justifying the denomination subpolynomial. Notice as well that any non-trivial, $K$-subpolynomial function satisfies $\phi(1) >0$.

\begin{theorem}
\label{t4}
Let $p(\cdot)$ be a symmetric transition rate such that $p(\cdot) \in \mathrm{DAN}(\alpha)$ for some $\alpha \in (0,2)$. Assume that $p(1)>0$. For any $K>0$ there exists a constant $\kappa = \kappa(K,p(\cdot))$ such that for any $K$-subpolynomial function $\phi: \bb N \to \bb R$ and any $n \in \bb N$,
\begin{equation}
\label{copiapo}
\sum_{z=1}^n q(z) \phi(z) \leq \kappa \sum_{z=1}^n p(z) \phi(z), 
\end{equation}
where $q(z):= z^{-(1+\alpha)}$.
\end{theorem}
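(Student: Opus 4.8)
The plan is to establish \eqref{copiapo} by a multiscale (dyadic) decomposition of the interval $\{1,\dots,n\}$, reducing the claim to a statement comparing the total $p$-mass and the total $q$-mass on dyadic blocks, weighted by the value of $\phi$ there. The key structural facts I would use are: (i) the subpolynomial bound $\phi(x+y)\le K(\phi(x)+\phi(y))$, which by iteration gives that on a dyadic block $[2^j,2^{j+1})$ the function $\phi$ varies by at most a bounded factor depending only on $K$, so that $\phi$ is essentially constant $\approx \phi(2^j)$ on such a block; and (ii) the hypothesis $p(\cdot)\in\mathrm{DAN}(\alpha)$, which via \eqref{antofagasta} says $\sum_{y\ge x}p(y)\sim c\,x^{-\alpha}$, hence the $p$-mass of a dyadic block $[2^j,2^{j+1})$ is comparable, \emph{up to constants independent of $j$}, to $2^{-j\alpha}\asymp\sum_{z\in[2^j,2^{j+1})}q(z)$. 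Granting these two facts, the left side of \eqref{copiapo} is $\asymp\sum_j \phi(2^j)\,2^{-j\alpha}$ and the right side is $\gtrsim\sum_j\phi(2^j)\cdot(\text{block }p\text{-mass})\asymp\sum_j\phi(2^j)\,2^{-j\alpha}$, and we are done.

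More concretely I would proceed as follows. First, record the elementary consequences of $K$-subpolynomiality: $\phi(1)>0$ for nontrivial $\phi$, $\phi$ is bounded on $\{1\}$, and for $2^j\le z< 2^{j+1}$ one has $c_K\,\phi(2^j)\le\phi(z)\le C_K\,\phi(2^j)$ with $c_K,C_K$ depending only on $K$ (write $z$ and $2^j$ as sums of at most $\lceil\log_2 z\rceil$ comparable pieces and iterate \eqref{caldera}; a cleaner route is to note $\phi(2z)\le 2K\phi(z)$ and $\phi(z)\le K(\phi(z')+\phi(z-z'))$ for a near-bisection, giving doubling and near-monotonicity up to constants). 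Second, sum $q(z)=z^{-(1+\alpha)}$ over $2^j\le z<2^{j+1}$ to get the two-sided bound $a_\alpha\,2^{-j\alpha}\le\sum_{2^j\le z<2^{j+1}}q(z)\le b_\alpha\,2^{-j\alpha}$. Third, use \eqref{antofagasta}: there is $j_0=j_0(p)$ and a constant $c'>0$ such that for $j\ge j_0$, $\sum_{2^j\le z<2^{j+1}}p(z)=\big(\sum_{y\ge 2^j}p(y)\big)-\big(\sum_{y\ge 2^{j+1}}p(y)\big)\ge c'\,2^{-j\alpha}$ (here one needs that the subtraction does not kill the leading term, which holds because $c\,2^{-j\alpha}-c\,2^{-(j+1)\alpha}=c(1-2^{-\alpha})2^{-j\alpha}>0$; for the finitely many small scales $j<j_0$ one uses $p(1)>0$ and finiteness to absorb them into the constant). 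Fourth, assemble: break $\sum_{z=1}^n$ into dyadic blocks up to the block containing $n$, bound $\phi$ on each block by $C_K\phi(2^j)$ on the left and below by $c_K\phi(2^j)$ on the right, and compare block masses using steps two and three.

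The main obstacle I anticipate is not the dyadic comparison itself but making step three fully rigorous at \emph{all} scales simultaneously with a single constant $\kappa$ depending only on $K$ and $p(\cdot)$. The relation \eqref{antofagasta} is only an asymptotic statement, so for small $j$ the block $p$-mass could a priori be much smaller than $2^{-j\alpha}$, and one must check that there are only finitely many such bad scales and that they can be absorbed. This is where the hypothesis $p(1)>0$ does real work: it guarantees $p$-mass at scale $j=0$, and more generally, together with $p(\cdot)\in\mathrm{DAN}(\alpha)$ being nontrivial, it forces $\sum_{2^j\le z<2^{j+1}}p(z)>0$ for every $j$ with $2^j\le n$ once $n$ is in the relevant range (or, for the truly finite set of exceptional $j$, one argues that the corresponding contribution to the left-hand side is at most a $K$-dependent multiple of $\phi(1)$ times a convergent tail, hence $\le\kappa\,p(1)\phi(1)\le\kappa\sum p(z)\phi(z)$). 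A secondary technical point is the top block, which may be truncated at $n$; here one simply notes that truncation only decreases both sides and that the constants in steps two and three are uniform in $j$, so the truncated top block is handled exactly like a full block. Putting these together yields \eqref{copiapo} with $\kappa=\kappa(K,p(\cdot))$ as claimed.
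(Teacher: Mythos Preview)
Your proof has a genuine gap at the very first step: the claimed two-sided bound $c_K\,\phi(2^j)\le\phi(z)\le C_K\,\phi(2^j)$ for $z\in[2^j,2^{j+1})$ is false. The $K$-subpolynomial inequality \eqref{caldera} is a one-sided condition: it bounds $\phi$ at a sum from above by the values at the summands, but it gives no lower bound whatsoever on $\phi$ at any individual point, and it gives no ``near-monotonicity''. A concrete counterexample is $\phi(z)=\mathbf{1}\{z\text{ is odd}\}$, which satisfies \eqref{caldera} with $K=1$ (if $x+y$ is odd then one of $x,y$ is odd, so $\phi(x)+\phi(y)\ge 1$), yet $\phi(2^j)=0$ for every $j\ge 1$ while $\phi(2^j+1)=1$. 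Thus neither inequality in your claimed two-sided bound holds.

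This is not a cosmetic problem; it breaks the whole block-by-block strategy. Pair the $\phi$ above with a transition rate $p(\cdot)\in\mathrm{DAN}(\alpha)$ supported on the even integers together with a single atom at $1$ (so that $p(1)>0$). On every dyadic block $[2^j,2^{j+1})$ with $j\ge 1$ one has $\sum_{z\in\text{block}}p(z)\phi(z)=0$, because $p$ lives on evens where $\phi$ vanishes, while $\sum_{z\in\text{block}}q(z)\phi(z)\asymp 2^{-j\alpha}>0$. So the per-block comparison you propose fails at every scale; the theorem is nevertheless true here, but only because the entire right-hand side is carried by the single term $p(1)\phi(1)$, which your scheme does not see.

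This is exactly the obstruction the paper flags before its proof (the example with ``huge gaps'' in $p$), and it is why the paper does \emph{not} attempt to freeze $\phi$ on blocks. Instead it uses \eqref{caldera} directly as a transport inequality: for $x\in B_n$ and $y\in A_n$ one writes $\phi(x)\le K(\phi(y)+\phi(x-y))$, multiplies by $p(y)q(x)$, and sums. This produces a bound of the form $\mc D_q(B_n)\le C\,\mc D_p(A_n)+\Gamma_2\,\mc D_q(D_n)$, where the second term is a $q$-sum over a \emph{different} region with a small prefactor $\Gamma_2$ (made small by choosing the scale ratio $b$ close to $1$). Summing over scales and absorbing the $\theta\,\mc D_q$ term into the left side closes the argument. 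The essential point is that $\phi$ is never compared pointwise at different arguments; only the inequality \eqref{caldera} is used, and it is used to move $q$-mass onto $p$-mass at the cost of a contractive remainder.
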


Before we start the proof of Theorem \ref{t4}, let us explain why such a result can be difficult to prove. Let $\{z_\ell; \ell \in \bb N\}$ be an increasing sequence of natural numbers such that $z_\ell \to \infty$ and $\frac{z_{\ell+1}}{z_\ell} \to1$ as $\ell \to \infty$. Then define
\[
p(\pm z_\ell) = \frac{1}{z_\ell^\alpha} - \frac{1}{z_{\ell+1}^\alpha}
\]
for $\ell \in \bb N$ and $p(\pm z) =0$ otherwise. The transition rate $p(\cdot) \in \mathrm{DAN}(\alpha)$, but it has huge gaps on which $p(z) =0$. In consequence, on the right-hand side of \eqref{copiapo}, $\phi(z)$ does not appear for a whole bunch of points $z$. Filling these gaps would require the use of \eqref{caldera} in a very careful way, to not overuse small values of $z$. We were able to construct by hand a proof of Theorem \ref{t4} for this particular choice of $p(\cdot)$, but it turned out to be too much dependent on the particular structure of $p(\cdot)$. Therefore, we will proceed with a different idea.

\begin{proof}[Proof of Theorem \ref{t4}]
The idea is to use a renormalization group approach. In a sense, the most regular transition rate in $\mathrm{DAN}(\alpha)$ should be $q_0(\cdot)$ defined as $q_0(z) = z^{-\alpha} -(z+1)^{-\alpha}$ for any $z >0$. 
Notice that $q_0(z) = \alpha |z|^{-(1+\alpha)} + \mc O\big(|z|^{-(2+\alpha)}\big)$ and therefore $q_0(\cdot)$ is equivalent to $q(\cdot)$. This means that \eqref{copiapo} holds for $q(\cdot)$ if and only if it holds for $q_0(\cdot)$, with maybe a different constant $\kappa$.

It will be convenient to introduce some notation. For $A \subseteq \bb N$ we define $p(A) = \sum_{x \in A} p(x)$ and
\[
\mc D_p(A) = \sum_{x \in A} p(x) \phi(x).
\]
We define $q(A)$ and $\mc D_q(A)$ replacing $p(\cdot)$ by $q(\cdot)$ in the definitions above. Theorem \ref{t4} will be proved if we show that there exist constants $\kappa_1 \in (0,\infty)$ and $\theta \in (0,1)$ such that
\[
\mc D_q\big( \{1,\dots,n\}\big) \leq \kappa_1 \mc D_p\big(\{1,\dots,n\}\big) + \theta \mc D_q\big(\{1,\dots,n\}\big) 
\]
for any $\phi$ $K$-subpolynomial and any $n \geq n_0$. The number $n_0$ may depend on $K$ and $p(\cdot)$, but not on $\phi$ or $n$. 

Let $\{b_n; n \in \bb N\}$ be a strictly increasing sequence in $\bb N$. Assume that there is a constant $b \in (1,\frac{1+\sqrt 5}{2})$ such that
\[
\lim_{n \to \infty} \frac{b_n}{b^n} =1.
\]
Define $a_n = b_n+b_{n+1}-b_{n+2}$. Notice that
\[
\lim_{n \to \infty} \frac{a_n}{b_n} = 1+b-b^2 =:a
\]
belongs to the interval $(0,1)$. Notice as well that
\[
\lim_{n \to \infty} \frac{a_{n+1}-a_n}{a_n} = b-1>0.
\]
Therefore, $a_n$ is positive and strictly increasing for $n \geq n_0$. For simplicity, we assume that $a_n$ is positive and strictly increasing for any $n$, a condition that is satisfied after considering $b_{n-n_0}$ instead of $b_n$. Define the intervals
\[
A_n = \{a_n+1,\dots,b_n\}, \quad B_n =\{b_{n+1}+1,\dots,b_{n+2}\},
\]
\[
D_n = \{b_{n+1}-b_n+1,\dots,b_{n+2}-a_n-1\}.
\]
Recall \eqref{antofagasta}. Absorbing the constant $c$ into $\kappa$, wlog\footnote{{\em wlog} stands for {\em without loss of generality}.} we can assume that $c=1$. Notice that
\begin{equation}
\label{coquimbo}
\lim_{n \to \infty} b^{n} \big( p(A_n),p(B_n),p(D_n)\big) = \Big( \frac{1}{a^\alpha}-1, \frac{1}{b^\alpha}-\frac{1}{b^{2\alpha}}, \frac{1}{(b-1)^\alpha}-\frac{1}{(b^2-a)^\alpha}\Big).
\end{equation}
Notice as well that
\[
\lim_{x \to \infty} x^\alpha \sum_{y \geq x} q(y) = \frac{1}{\alpha},
\]
and therefore the limits in \eqref{coquimbo} are multiplied by $\alpha$ when we consider $q(\cdot)$ instead of $p(\cdot)$. By construction, $x-y \in D_n$ whenever $x \in B_n$ and $y \in A_n$. We have that
\[
\phi(x) \leq K\big( \phi(y) + \phi(x-y)\big).
\]
Multiplying this estimate by $p(y)$ and summing over $y \in A_n$ we get the bound
\[
p(A_n) \phi(x) \leq K \mc D_p(A_n) + K \sum_{y \in A_n} p(y) \phi(x-y).
\]
Multiplying this estimate by $q(x)$ and summing over $x \in B_n$, we get the bound
\begin{equation}
\label{vicuna}
p(A_n) \mc D_q(B_n) \leq K q(B_n) \mc D_p(A_n) + K \sum_{\substack{x \in B_n\\y \in A_n}} q(x) p(y) \phi(x-y).
\end{equation}
The sum on the right-hand side of this estimate can be written as
\[
\sum_{z \in D_n} \theta_n(z) q(z) \phi(z), \text{ where } \theta_n(z) := \sum_{\substack{y \in A_n:\\y+z \in B_n}} \frac{q(y+z)p(y)}{q(z)}.
\]
Using the monotonicity of $q(\cdot)$ we see that
\[
\theta_n(z) \leq \sum_{y \in A_n} \frac{q(b_{n+1})p(y)}{q(b_{n+2}-a_n)} = \frac{q(b_{n+1}) p(A_n)}{q(b_{n+2}-a_n)},
\]
from where
\[
\sum_{z \in D_n} \theta_n(z) q(z) \phi(z) \leq \frac{q(b_{n+1}) p(A_n)}{q(b_{n+2}-a_n)} \mc D_q(D_n).
\]
Putting this estimate back into \eqref{vicuna}, we conclude that
\[
\mc D_q(B_n) \leq \frac{K q(B_n)}{p(A_n)} \mc D_p(A_n) +  \frac{K q(b_{n+1})}{q(b_{n+2}-a_n)} \mc D_q(D_n)
\]
as soon as $p(A_n) >0$. Notice that $b^{\alpha n} p(A_n) \to a^{-\alpha}-1>0$ as $n \to \infty$. Therefore, there exists $n_1$ finite such that $p(A_n) >0$ for any $n \geq n_1$. Considering the sequence $\{b_{n-n_1}; n \in \bb N\}$ instead of $\{b_n; n \in \bb N\}$, we can assume wlog that $p(A_n)>0$ for any $n$. Notice that $b^2-a = (2b+1)(b-1)$. Therefore,
\[
\Gamma_1 := \lim_{n \to \infty} \frac{q(B_n)}{p(A_n)} = \frac{b^{-\alpha} - b^{-2\alpha}}{\alpha(a^{-\alpha}-1)}
\]
and
\[
\Gamma_2:= \lim_{n \to \infty} \frac{q(b_{n+1})}{q(b_{n+2}-a_n)} = \Big( \frac{(b-1)(2b+1)}{b}\Big)^{1+\alpha}.
\]
We conclude that there exists $n_2 \geq n_1$ such that
\begin{equation}
\label{tongoy}
\mc D_q(B_n) \leq 2 K\big( \Gamma_1\mc D_p(A_n) +  \Gamma_2 \mc D_q(D_n) \big)
\end{equation}
for any $n \geq n_2$. What it is important is that the constants $\Gamma_1, \Gamma_2$ do not depend on $n$. When $b \to 1$, $\Gamma_1 \to \alpha^{-1}$, while $\Gamma_2 \to 0$. In fact,
\begin{equation}
\label{sanfelipe}
\lim_{b \to 1} \frac{\Gamma_2}{(b-1)^{1+\alpha}} = 3^{1+\alpha}.
\end{equation}
This fact will play a crucial role in a few lines. Define $I_n = \{1,\dots,b_n\}$. Notice that the sets $B_n$ are disjoints, but the sets $A_n$, $D_n$ are not. Let $m \geq n_2$ be a constant to be chosen later on.  Notice that $\cup_{j=m}^n B_j = I_{n+2} \setminus I_{m+1}$. Summing estimates \eqref{tongoy} over $\{m,\dots,n\}$ we obtain the bound
\begin{equation}
\label{losandes}
\mc D_q(I_{n+2}\setminus I_{m+1}) \leq 2K\big( \ell_1 \Gamma_1 \mc D_p(I_n) + \ell_2 \Gamma_2 \mc D_q(I_{n+2})\big),
\end{equation}
where the constants $\ell_1$, $\ell_2$ count the number of overlaps of the sets $\{A_j; j=m,\dots,n\}$, $\{D_j; j=m,\dots,n\}$. Let us proceed to compute the constants $\ell_1$, $\ell_2$. Since $\frac{a_n}{b_n} \to a>0$ as $n \to \infty$, for any $\ell \in \bb N$ such that $b^{-\ell} <a$, $z$ belongs to at most $\ell$ sets $A_j$ for any $z$ large enough. Define
\[
\ell_1 = \Big \lceil \frac{-\log a}{\log b} \Big \rceil +1
\]
and take $n_3 \geq n_2$ such that the intervals $A_n$, $A_{n+\ell_1}$ do not overlap for any $n \geq n_3$.
In the same way, if 
\[
b_{n+\ell+1} -b_{n+\ell} > b_{n+2} -a_n,
\]
then the intervals $D_n$, $D_{n+\ell}$ do not overlap. When $n \to \infty$, this condition becomes $b^\ell > 2b+1$, so if we choose
\[
\ell_2 = \Big\lceil \frac{\log(2b+1)}{\log b} \Big\rceil,
\]
we see that there exists $n_4 \geq n_3$ such that $z$ belongs to at most $\ell$ sets $D_j$ for any $z \geq b_{n_3+2} - a_{n_3}$. Therefore, we can choose $m = n_4$ and \eqref{losandes} holds true for any $n \geq m$. Notice that 
\[
\lim_{b \to 1} (b-1) \Big\lceil \frac{\log(2b+1)}{\log b} \Big\rceil = \log 3
\]
and
\[
\lim_{b \to 1}  \Big \lceil \frac{-\log a}{\log b} \Big \rceil =1.
\]
Observe that $\ell_2 \Gamma_2 \to 0$ as $b \to 1$, thanks to the exponent $1+\alpha$ in \eqref{sanfelipe}. Now we need to choose all the constants in the right order. Let $\delta \in (0, \frac{-1+\sqrt 5}{2})$ be such that $\theta := 2K \ell_2 \Gamma_2 <1$ for $b=1+\delta$. Let $k_0$ be such that $b^k > \frac{2}{\delta}$. Define $b_n = \lfloor b^{n+k_0} \rfloor$. The condition on $k_0$ ensures that $\{b_n; n \in \bb N\}$ is strictly increasing. Then, there exists $m = m(b, p(\cdot))$ 
\[
\mc D_q( I_{n+2} \setminus I_{m+1}) \leq 2 K  \ell_1 \Gamma_1 \mc D_p(I_{n}) + \theta \mc D_q(I_{n+2}).
\]
No w we just need to estimate $\mc D_q(I_{m+1})$. This is a finite problem, since $m$ is fixed. This estimate becomes very simple if we admit the following lemma:

\begin{lemma}
\label{l1}
Let $K>0$ be fixed. Then for any $K$-subpolynomial function $\phi: \bb N \to [0,\infty)$, 
\[
\phi(x) \leq 2K \phi(1) x^\nu
\]
for any $x \in \bb N$, where $\nu = 1+ \frac{\log K}{\log 2}$.
\end{lemma}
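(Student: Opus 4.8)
The plan is to prove the pointwise bound by a single dyadic estimate built from the one consequence of \eqref{caldera} that is free, namely $\phi(2x)\le 2K\phi(x)$ (take $x=y$), which iterates to $\phi(2^{k})\le(2K)^{k}\phi(1)$; since $2K=2^{\nu}$ this is already the asserted inequality along the powers of two. For an arbitrary $x\in\bb N$ I would set $m=\lfloor\log_{2}x\rfloor$, so $2^{m}\le x<2^{m+1}$, and write $x$ as a sum of exactly $2^{m}$ terms, each equal to $1$ or $2$, by $m$ rounds of \emph{balanced halving}: recursively replace every value $v$ produced so far by the pair $\lceil v/2\rceil,\lfloor v/2\rfloor$. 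Every value seen before the last round is $\ge 2$ (so no $0$ ever appears), and by the identity $\lceil\lceil v/a\rceil/b\rceil=\lceil v/(ab)\rceil$, its floor analogue, and the monotonicity of $v\mapsto\lfloor v/2\rfloor,\lceil v/2\rceil$, every leaf after $m$ rounds lies in $\{\lfloor x/2^{m}\rfloor,\lceil x/2^{m}\rceil\}=\{1,2\}$; counting, exactly $t:=x-2^{m}$ of the $2^{m}$ leaves equal $2$.

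I would then unfold \eqref{caldera} along this complete binary tree of depth $m$, picking up one factor $K$ per level, to get $\phi(x)\le K^{m}\sum_{j=1}^{2^{m}}\phi(c_{j})$ with each $c_{j}\in\{1,2\}$. Inserting $\phi(2)\le 2K\phi(1)$ and $t=x-2^{m}<2^{m}$ (from $x<2^{m+1}$) gives
\[
\phi(x)\;\le\;K^{m}\phi(1)\big[(2^{m}-t)+2Kt\big]\;=\;K^{m}\phi(1)\big[2^{m}+(2K-1)t\big]\;\le\;2K\,(2K)^{m}\phi(1).
\]
Finally, since $x\ge 2^{m}$ and $\nu\ge 0$ (we use $K\ge 1$ here, which holds in all the applications), $x^{\nu}\ge(2^{\nu})^{m}=(2K)^{m}$, whence $\phi(x)\le 2K\,(2K)^{m}\phi(1)\le 2K\phi(1)\,x^{\nu}$, as claimed. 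This argument also covers $x=1,2$ (take $m=0,1$, $t=0$), so no separate base case is needed.

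The one genuinely delicate point — and the reason for performing all $m$ halvings at once rather than running an ordinary induction on $x$ — is that the two-term step $\phi(x)\le K\big(\phi(\lceil x/2\rceil)+\phi(\lfloor x/2\rfloor)\big)$ does \emph{not} close against the candidate bound when $x$ is odd: with $2K=2^{\nu}$ and $\nu>1$, convexity of $t\mapsto t^{\nu}$ forces $K\big(\lceil x/2\rceil^{\nu}+\lfloor x/2\rfloor^{\nu}\big)\ge x^{\nu}$, the wrong direction, with equality only for an exactly even split. Carrying out the full balanced decomposition funnels the entire accumulated loss into the single extra factor $2K$ produced by the value-$2$ leaves, and that factor is precisely the slack that $x^{\nu}\ge(2^{m})^{\nu}$ provides. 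Everything else — the elementary combinatorics of balanced halving and the trivial small cases — is routine bookkeeping.
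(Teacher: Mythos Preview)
Your argument is correct (and, like the paper's, tacitly uses $K\ge\tfrac12$; already at $x=1$ the inequality $\phi(1)\le 2K\phi(1)$ forces this, and your step $2^{m}+(2K-1)t\le 2K\cdot 2^{m}$ needs $2K-1\ge 0$ as well, so your caveat ``$K\ge 1$'' can be relaxed to $K\ge\tfrac12$). The route, however, is genuinely different from the paper's and noticeably more elaborate. The paper avoids the convexity obstruction you identify not by unfolding a full depth-$m$ binary tree, but by inducting on the \emph{dyadic maxima} $\Theta_k:=\sup_{x\le 2^{k}}\phi(x)$: any $x\le 2^{k+1}$ with $x\ge 2$ splits as $y+z$ with $y,z\le 2^{k}$, giving $\Theta_{k+1}\le 2K\Theta_k$ in one line, hence $\Theta_k\le(2K)^{k}\phi(1)$ and $\phi(x)\le\Theta_{k+1}\le 2K(2K)^{k}\phi(1)\le 2K\phi(1)\,x^{\nu}$ for $2^{k}<x\le 2^{k+1}$. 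This sidesteps the failure of the naive induction because the recursion is run on the coarse quantities $\Theta_k$, not on the target bound $x\mapsto x^{\nu}$, so convexity never enters. Your balanced-halving decomposition reaches the same numerical bound $2K(2K)^{m}\phi(1)$ but at the cost of the leaf-counting combinatorics; what it buys is an explicit decomposition of $x$ into $2^{m}$ summands in $\{1,2\}$, which is pleasant but unnecessary here.
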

Using the lemma,
\[
\mc D_q(I_{m+1}) = \sum_{x=1}^{b_{m+1}} \frac{\phi(x)}{x^{1+\alpha}} \leq \sum_{x=1}^{b_{m+1}} C \phi(1) x^{\nu-1-\alpha} \leq C_1 \phi(1),
\]
where $C_1$ depends on $K$, $\alpha$ and $m$. But $\phi(1) \leq p(1)^{-1} \mc D_p(I_n)$ for any $n \in \bb N$. We conclude that
\[
\mc D_q(I_{n+2})\leq \frac{2 K \ell_1 \Gamma_1+C_1}{1-\theta} \mc D_p(\mc I_n),
\]
which proves the theorem.
\end{proof}

\begin{proof}[Proof of Lemma \ref{l1}]
Let $\Theta_k := \sup_{x \leq 2^k} \phi(x)$. Then, any $x \leq 2^{k+1}$ can be written as $x = y+z$ with $y, z \leq 2^k$. Therefore, $\phi(x) \leq K(\phi(y)+\phi(z))$, from where we conclude that $\Theta_{k+1} \leq 2 K \Theta_k$. Inductively, $\Theta_k \leq (2K)^k\Theta_0= (2K)^k \phi(1)$. Therefore, if $2^k < x \leq 2^{k+1}$,  
\[
\phi(x) \leq \Theta_{k+1} \leq (2K)^{k+1} \phi(1) \leq 2K \phi(1) 2^{k\frac{\log 2K}{\log 2}} \leq 2K \phi(1) x^\nu,
\]
as we wanted to show.
\end{proof}

\section{Spectral gaps inequalities}
\label{s3}
 With the comparison principle at our disposal, it is not very difficult to prove Theorems \ref{t1} and \ref{t3}. Recall that Theorem \ref{t2} is a consequence of Theorem \ref{t2} and Aldous' conjecture. Both proofs follow the same strategy: first we prove the lower bound for the transition rate $q(z) = |z|^{-(1+\alpha)}$ and then we use the comparison principle and the variational formula \eqref{varia} to extend this lower bound to general $p(\cdot) \in \mathrm{DAN}(\alpha)$. The upper bounds is obtained by choosing a suitable test function.

\begin{proof}[Proof of Theorem \ref{t2}]
Recall that the unique equilibrium of the process $y_n(\cdot)$ is the uniform measure on $\Lambda_n$. For $f: \Lambda_n \to \bb R$ such that $\int f d \mu_n =0$,
\[
\|f\|_{L^2(\mu)}^2 = \frac{1}{2n+1} \sum_{x \in \Lambda_n} f(x)^2
\]
and 
\[
\mc D(f) = \frac{1}{2n+1} \sum_{x, y \in \Lambda_n} p(y-x) \big(f(y)-f(x)\big)^2.
\]
We start proving Theorem \ref{t1} for $q(z)  = |z|^{-(1+\alpha)}$. Fix $n \in \bb n$ and let $f: \Lambda_n \to \bb R$ be such that $\sum_{x\in \Lambda_n} f(x) =0$. Then,
\[
\begin{split}
\frac{1}{2n+1} \sum_{x \in \Lambda_n} f(x)^2 
		&= \frac{1}{2(2n+1)^2} \sum_{x,y \in \Lambda_n} \big( f(x) -f(y) \big)^2\\
		&\leq  \frac{1}{2(2n+1)^2} \sum_{x,y \in \Lambda_n} \big( f(x) -f(y) \big)^2\\
		&\leq \frac{1}{2(2n+1)^2} \sum_{x,y \in \Lambda_n} (2n+1)^{1+\alpha} q(y-x) \big(f(y)- f(x)\big)^2,
\end{split}
\]
since $q(y-x) \geq (2n+1)^{-(1+\alpha)}$ for any $x,y \in \Lambda_n$. This proves the lower bound in Theorem \ref{t1} for $q(\cdot)$ with $\kappa_2 = 2$. Now define 
\[
\phi(x) = \sum_{x=-n}^{n-k} \big( f(x+k)-f(x)\big)^2 
\]
for $x=1,\dots, 2n$, and $\phi(x) =0$ otherwise. If $x+y \leq 2n$, 
\[
\begin{split}
\phi(x+y) 
		&=\sum_{z=-n}^{n-x-y} \big(f(z+x+y)-f(z)\big)^2 \\
		&\leq \sum_{z=-n}^{n-x-y} 2\Big\{\big(f(z+x+y)-f(z+x)\big)^2+\big(f(z+x)-f(z)\big)^2\Big\}\\
		&\leq 2\big(\phi(x) + \phi(y)\big).
\end{split}
\]
If $x+y >2n$, $\phi(x) = 0$ and we conclude that $\phi$ is $2$-subpolynomial. Therefore, we can apply Theorem \ref{t4} to prove that
\[
 \frac{1}{2n+1} \sum_{x, y \in \Lambda_n} q(y-x) \big(f(y)-f(x)\big)^2 \leq 
 \frac{\kappa}{2n+1} \sum_{x, y \in \Lambda_n} p(y-x) \big(f(y)-f(x)\big)^2,
\]
from where the lower bound of Theorem \ref{t1} follows for $p(\cdot)$ with $\kappa_2 = 2 \kappa$. The upper bound can be proved using $f(x) = \sqrt{\frac{2n+1}{n}} \mathbf{1}(x>0)$ as a test function.
\end{proof}

\begin{proof}[Proof of Theorem \ref{t3}]
For $x,y \in \Lambda_n$ and $f: \Omega_{n,\ell}^{\mathrm{zr}} \to \bb R$, define
\[
\phi_{n,\ell}^{x,y} (f) := \int g(\xi_x) \big( f(\xi^{x,y}) -f(\xi)\big)^2 d \mu_{n,\ell}^{\mathrm{zr}}.
\]
Notice that
\[
\mc D_{n,\ell}^{\mathrm{zr}}(f) = - \int f L_{n,\ell}^{\mathrm{zr}} f d \mu_{n,\ell}^{\mathrm{zr}} = \frac{1}{2} \sum_{x,y \in \Lambda_n} p(y-x) \phi_{x,\ell}^{x,y} (f).
\]
It is well-known (see Chapter 5.5 in \cite{KipLan} for example) that for any $x,y, z \in \Lambda_n$, 
\[
\phi_{n,\ell}^{x,z}(f)  \leq  2 \big( \phi_{n,\ell}^{x,y}(f) - \phi_{n,\ell}^{y,z}(f)\big).
\]
As in the proof of Theorem \ref{t1}, define for $x=1,\dots,2n$,
\[
\phi_{n,\ell}(x) = \sum_{y=-n}^{n-x} \phi_{n,\ell}^{x,x+k}(f)
\]
and $\phi_{n,\ell}(x)= 0$ for $x >2n$. Since $\phi$ is $2$-subpolynomial, the same computations performed above give the estimate
\[
\mc D_{n,\ell}^{\mathrm{zr}} (f) \geq \frac{1}{2\kappa} \sum_{z=1}^{2n} q(z) \phi_{n,\ell}(z)
		\geq \frac{1}{\kappa (2n+1)^{1+\alpha}} \cdot \frac{1}{2} \sum_{x,y \in \Lambda_n} \phi_{n,\ell}^{x,y}(f).
\]
This last sum is the Dirichlet form of the zero-range process on the complete graph. On the complete graph, under the conditions of part i) of Theorem \ref{t3}, it was proved in \cite{Cap} (see also \cite{LanSetVar}) that the spectral gap is of order $\mc O\big(n\big)$. Under the conditions of 
part i) of Theorem \ref{t3}, it was proved in \cite{Mor} that the spectral gap is of order $N(1+\rho)^{-2}$, so Theorem \ref{t4} is proved.
\end{proof}

\section{Applications}
\label{s4}

In this section we prove Corollary \ref{c1} and we also point out some other possible applications of the spectral gap inequalities proved here.

\subsection{Proof of Corollary \ref{c1}}

By reversibility of the random walk $y(\cdot)$, we have that
\[
\bb P^0\big(y(2t) = 0\big) = \sum_{x \in \bb Z} \bb P^0\big( y(t) =x\big)^2,
\]
so it is enough to prove that
\[
\psi(t) := \sum_{x \in \bb Z} \bb P^0\big( y(t) =x\big)^2 \leq \frac{c_1}{t^{1/\alpha}}
\]
for some finite constant $c_1$. Let us define $f_t(x) = \bb P^0\big(y(t) =x\big)$. We have that
\[
\tfrac{d}{dt} \psi(t)= -2 \sum_{x,y \in \bb Z} p(y-x) \big( f_t(y) -f_t(x) \big)^2.
\]
Notice that the right-hand side of this identity is very similar to $\mc D(f_t)$, except that the sum goes over all of $\bb Z$ instead of $\Lambda_n$ and we miss the prefactor $(2n+1)^{-1}$. It is also important to point out that $f_t$ is {\em not} a mean-zero function. We need to fix both problems in order to use Theorem \ref{t2}. Fix $n \in \bb N$ and define
\[
\nu_{n,x}(t) := \sum_{i=-n}^{n} f_t\big((2n+1) x +i\big).
\]
 Since $f_t$ is a transition probability, $\sum_{x \in \bb Z} \nu_{n,x}(t)=1$. Applying Theorem \ref{t1} to the function $f_t - \nu_{n,x}(t)$ restricted to the interval $\{x-n,\dots,x+n\}$ , we see that
 \begin{equation}
 \label{curico}
 \sum_{i=-n}^n f_t\big((2n+1)x+i\big)^2 \leq \kappa_1^{-1} (2n+1)^\alpha \mc D_{n,x}(f_t) + (2n+1)^{-2} \nu_{n,x}(t)^2,
 \end{equation}
 where
 \[
 \mc D_{n,x}(f_t) = \sum_{y,z=-n}^n p(z-y)\big(f_t(x+z)-f_t(x+y)\big)^2.
 \]
 Taking the sum with respect to $x$ in \eqref{curico}, we see that
 \[
\psi(t) \leq \kappa_1^{-1} (2n+1)^\alpha \sum_{x,y \in \bb Z} p(y-x) \big( f_t(y) -f_t(x) \big)^2
		+(2n+1)^{-1} \sum_{x \in \bb Z} \nu_{n,x}(t)^2.
 \]
Since $\nu_{n,x}(t) \leq 1$, the last sum above is bounded by $(2n+1)^{-1}$. Therefore,
 \[
 \tfrac{d}{dt} \psi(t) \leq -2\kappa_1 (2n+1)^{-\alpha} \psi(t) + 2\kappa_1 (2n+1)^{-(1+\alpha)}.
 \]
 Minimizing over $n$ we conclude that there exists a finite constant $c_2$ such that
 \[
 \tfrac{d}{dt} \psi(t) \leq - c_2 \psi(t)^{-(1+\alpha)}.
 \]
 Integrating in time this estimate, Corollary \ref{c1} is proved.

\subsection{Scaling limits of interacting particle systems}

In this section we mention some possible applications of the spectral gap estimates proved here. In \cite{GonJar}, fluctuation results were derived for the exclusion process with long jumps, including derivations of fractional heat and fractional KPZ equations. In Section 6.3 of \cite{GonJar}, it is discussed the role of the spectral gap inequality in the proof of such convergence results, and in particular Theorem \ref{t3} allows to extend the results of \cite{GonJar} to the case on which $p(\cdot) \in \mathrm{DAN}(\alpha)$. Similar results are obtained in \cite{Set} for the zero-range process with long jumps, where now the estimate of Theorem \ref{t4} is used as input, see condition (SG) in Section 2 of \cite{Set}. 

\section*{Acknowledgements}

\noindent
M.J.~would like to thank the warm hospitality of the Isaac Newton Institute of Mathematical Sciences, where this work was finished.
M.J.~acknowledges CNPq for its support through the Grant 305075/2017-9 and ERC for its support through the European Unions Horizon 2020 research and innovative programme (Grant Agreement No. 715734).

\bibliographystyle{plain}

\end{document}